\title{Dynamical  Sieve of Eratosthenes}
\author{Luis A. Mateos\email{lamateos@amcomputersystems.com}
       }
\institute{AM Computer Systems Research, \\
           1070 Vienna, Austria
          }
\abstract{

In this document, prime numbers are related as functions over time, mimicking the Sieve of Eratosthenes. 
For this purpose, the mathematical representation is a uni-dimentional time line depicting the number line for positive natural numbers $N$, where each number $n$ represents a time $t$. 
In the same way as the Eratosthenes' sieve, which iteratively mark as composite the multiples of each prime, starting at each prime.
This dynamical prime number function P($s$) zero-cross all composite numbers departing from primes, following a linear progression over time.

Moreover, this dynamical prime number function is then modified to zero-cross only $odd$ composite numbers from $odd$ primes, in order to attack the weak Goldbach  conjecture in a non-conventional way, which do not rely directly in trying to add prime numbers. Instead, the main goal is to depict the set of $odd$ numbers bigger than one, with the set of $odd$ primes. Thus, by representing the set of $odd$ numbers bigger than one, every combinatorial sum of three $odd$ prime numbers will result in every $odd$ number bigger than 7.

}
\keywords{Prime Number\*\ Dynamical Prime Number function \*\ Dynamical System \*\ Goldbach's Conjecture}
\begin{document}
\maketitle
\section{Introduction }


The sieve of Eratosthenes is a simple algorithm for finding all prime numbers up to any given limit. It iteratively mark as composite the multiples of each prime, starting from that prime \cite{sieves}.

Basically, this sieve algorithm requires the list of numbers upto a limit $z$, in order to mark as $composite$ all prime multiples, starting from $p=2$, and leaving without mark prime numbers.
Likewise, this list structure, in our dynamical representation, is arranged as a time line, describing points as times over a one-dimensional line uniformly separated, starting from zero and ending at an infinite time. Thus, each time $t$ is depicted as a number $n$, correlating both variables, $n = t$.
In order to develop the dynamical Eratosthenes' sieve, the algorithm must become dynamical, a function over time from its initial prime values. Hence, the dynamical sieve must include a starting point and a periodic function, mimicking the iteratively marking of composite numbers by instantaneously zero-cross all multiples of primes, starting from prime times.

\begin{figure}[htp]
\caption{Time line represents the sieve's list of number. \label{fig_1}}
\includegraphics[width=0.5\textwidth]{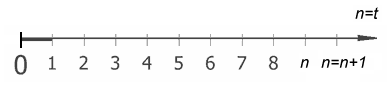}
\end{figure}

Consequently, the dynamical function includes two terms. The first term, $p$, represents the starting point of each prime function, at prime times. The second term includes the $sine$ periodic function, $sin(1/ p)$, representing the continuation in time of the prime periods $(1/ p)$.

\section{Dynamical Prime Number Function P($s$)}

Prime numbers are the building blocks of the positive integers, this was shown by Euler in his proof of the Euler product formula for the Riemann zeta function, Euler came up with a version of the sieve of Eratosthenes, better in the sense that each number was eliminated exactly once \cite{eulerproduct1_cite}. Unlike Eratosthenes' sieve which strikes off multiples of primes it finds from the same sequence, Euler's sieve works on sequences progressively culled from multiples of the preceding primes \cite{eulerproduct2_cite}.


\begin{equation}
\sum_{n=1}^{\infty} \frac{1}{n^s} = \prod_{p \, \mathrm{ prime}} \frac{1}{1-p^{-s}}
\end{equation}

However, in the proposed dynamical sieve, due to the inclusion of a periodic function, the sieve works instantaneously from its starting primes $p$  to $\infty$.
And by following a linear progression, prime numbers are decoded, while composite numbers are encoded over time by this dynamical prime number function $P(s)$ \cite{mateos1_cite}.

\begin{equation}
\label{psfunction}
 P(s) = \sum_{p \, \mathrm{ prime}} \left[ {p^s} + sin \left(\frac{1}{p^s}\right) \right]
\end{equation}


At each time $n^s>1$, for $s>0$, the function P($s$) evaluates if the number in consideration has been zero-cross or not. If a number $n^s$ has not been zero-cross, then the number is a prime $p^s$, this process we called $prime$ $decoding$. While, if a number $n^s$ has been zero-cross, then the number is $composite$, meaning that a previous prime or primes $p^s$ has been decoded and this zero-cross is a multiple of such prime or primes, this process we called $composite$ $encoding$.
Once a prime number $p^s$ is decoded, the function P($s$) start the expansion of the prime $p^s$ into its multiples, zero-crossing $composite$ numbers and decoding primes by leaving intact the time line at prime times $p^s$, as shown in figure \ref{fig_progression}.

\begin{figure}[htp]
\caption{Zero-cross numbers by P(s) =$ \sum_{p} \left[ {p^s} + sin \left(\frac{1}{p^s}\right) \right]$. 
In red $P(s)$ for $p^s=2^s$, in black $P(s)$ for $p^s>2^s$ (odd primes);
white squares zero-cross represent $composite$ numbers; white squares zero-cross by only $P(s)$ for $p^s=2^s$ (red color) represents numbers power of two $2^{s m}$, for $m>1$. \label{fig_progression}}
\includegraphics[width=1\textwidth]{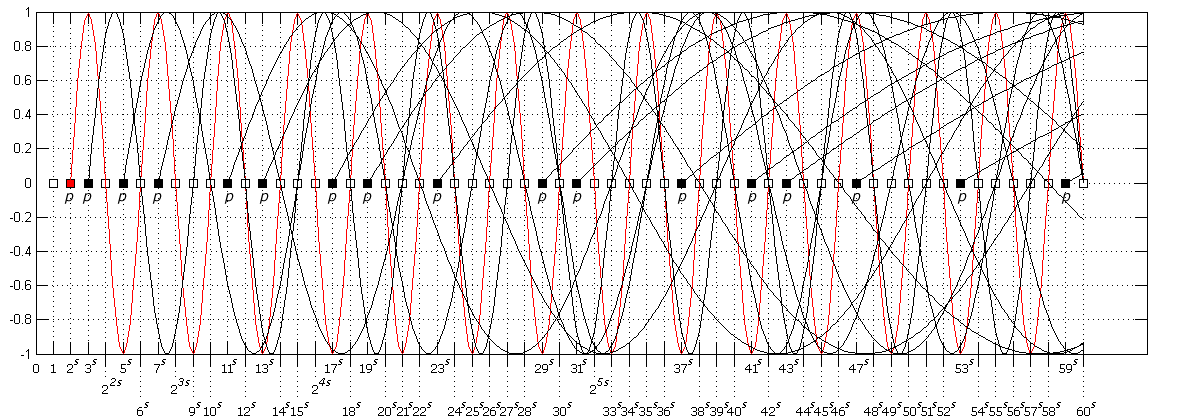}
\end{figure}

The process is as follows:

Starting at time $p^s=2^s$ the function P($s$)$=2^s+ sin \left(\frac{1}{2^s}\right)$ zero-crosses all multiples of $2^s$, while the maximum value of the function exist at all $odd$ times, $n^s_{odd}>1$, as shown in figure \ref{fig_2}.

\begin{figure}[htp]
\caption{P($s$)=$2^s+ sin \left(\frac{1}{2^s}\right)$ start at $p^s=2^s$ and zero-cross multiples of $2^s$. In red $p^s=2^s$. \label{fig_2}}
\includegraphics[width=1\textwidth]{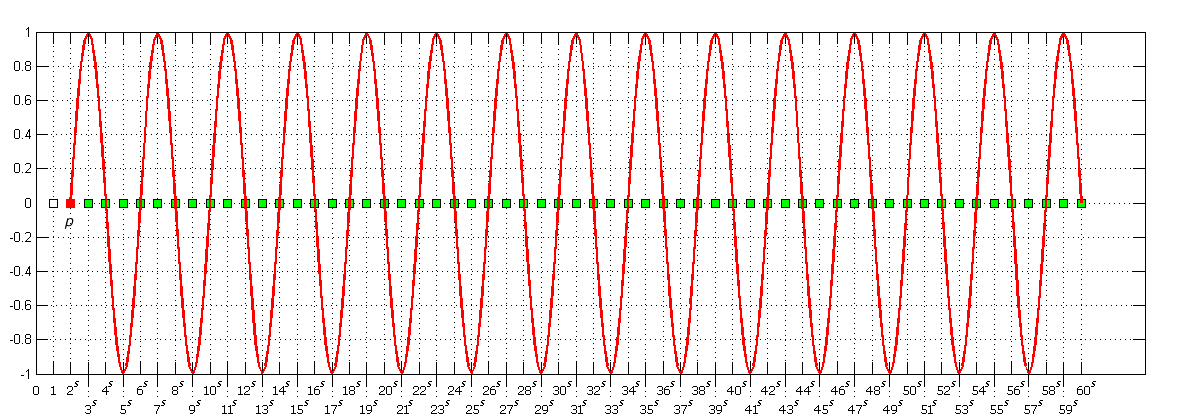}
\end{figure}

At time $p^s=3^s$ no zero-cross occur. Therefore, the function P($s$)$=3^s+ sin \left(\frac{1}{3^s}\right)$ start zero-crossing $odd$ and $even$ multiples of $3^s$, as shown in figure \ref{fig_3}.

\begin{figure}[htp]
\caption{P($s$)=$ \left[ 2^s+ sin \left(\frac{1}{2^s}\right)\right] + \left[ 3^s+ sin \left(\frac{1}{3^s}\right) \right]$ zero-cross multiples of $2^s$ and $3^s$. In red $p^s=3^s$. \label{fig_3}}
\includegraphics[width=1\textwidth]{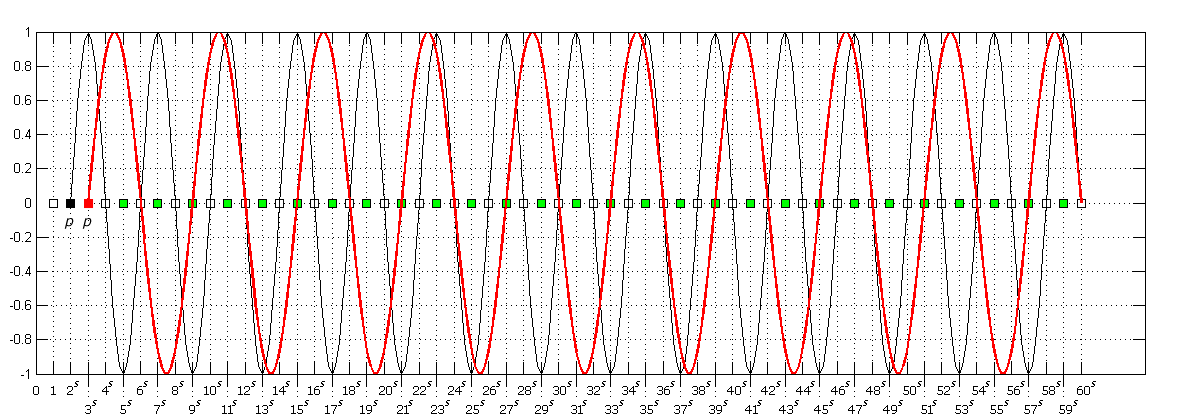}
\end{figure}

At time $n^s=4^s$ $n^s=6^s$,  $n^s=8^s$, $n^s=10^s$ and all $even$ times $n^s>2^s$ a zero-cross occur from the function P($s$)$=2^s+ sin \left(\frac{1}{2^s}\right)$ meaning $composite$ numbers, as shown in figure \ref{fig_2}.

At time  $p^s=5^s$ no zero-cross occur. Therefore, the function P($s$)$=5^s+ sin \left(\frac{1}{5^s}\right)$ start to zero-cross $odd$ and $even$ multiples of $5^s$ , as shown in figure \ref{fig_5}.

\begin{figure}[htp]
\caption{P($s$)=$ \left[ 2^s+ sin \left(\frac{1}{2^s}\right)\right] + \left[ 3^s+ sin \left(\frac{1}{3^s}\right) \right]+ \left[ 5^s+ sin \left(\frac{1}{5^s}\right) \right]$ zero-cross multiples of $2^s$, $3^s$ and $5^s$. In red $p^s=5^s$. \label{fig_5}}
\includegraphics[width=1\textwidth]{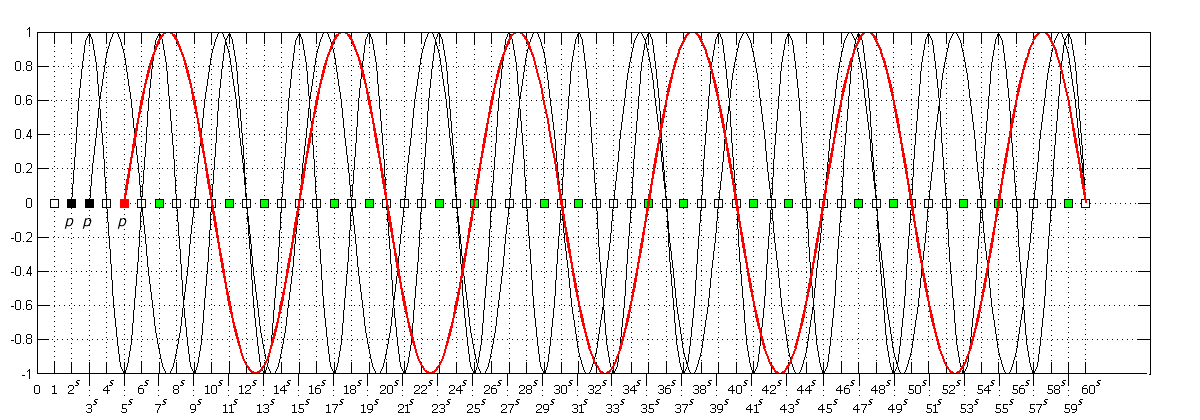}
\end{figure}

At time $p^s=7^s$ no zero-cross occur. Thus, the function P($s$)$=7^s+ sin \left(\frac{1}{7^s}\right)$ start to zero-cross $odd$ and $even$ multiples of $7^s$, as shown in figure \ref{fig_7}.

\begin{figure}[htp]
\caption{P($s$)=$ \left[ 2^s+ sin \left(\frac{1}{2^s}\right)\right] + \left[ 3^s+ sin \left(\frac{1}{3^s}\right) \right]+ \left[ 5^s+ sin \left(\frac{1}{5^s}\right) \right]+ \left[ 7^s+ sin \left(\frac{1}{7^s}\right) \right]$ zero-cross multiples of $2^s$, $3^s$, $5^s$ and $7^s$. In red $p^s=7^s$. \label{fig_7}}
\includegraphics[width=1\textwidth]{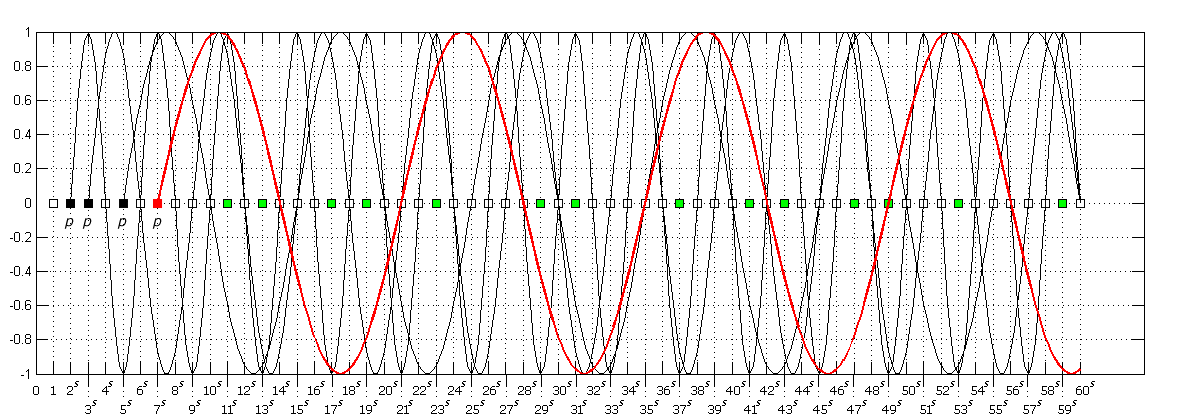}
\end{figure}

At time $n^s=9^s$ a zero-cross occur from the function P($s$)$=3^s+ sin \left(\frac{1}{3^s}\right)$ meaning a $composite$ number has been found, as shown in figure \ref{fig_3}.

At time $p^s=11^s$no zero-cross occur. Thus the function P($s$)$=11^s+ sin \left(\frac{1}{11^s}\right)$ start to zero-cross $odd$ and $even$ multiples of $11^s$, as shown in figure \ref{fig_11}. And so on.

Consequently, all P($s$) prime functions for $p^s>2^s$, zero-cross $odd$ and $even$ multiples of $p^s$.

\begin{figure}[htp]
\caption{P($s$)=$ \left[ 2^s+ sin \left(\frac{1}{2^s}\right)\right] + \left[ 3^s+ sin \left(\frac{1}{3^s}\right) \right]+ \left[ 5^s+ sin \left(\frac{1}{5^s}\right) \right]+ \left[ 7^s+ sin \left(\frac{1}{7^s}\right) \right] + \left[ 11^s+ sin \left(\frac{1}{11^s}\right) \right]$ zero-cross multiples of $2^s$, $3^s$, $5^s$, $7^s$ and $11^s$. In red $p^s=11^s$. \label{fig_11}}
\includegraphics[width=1\textwidth]{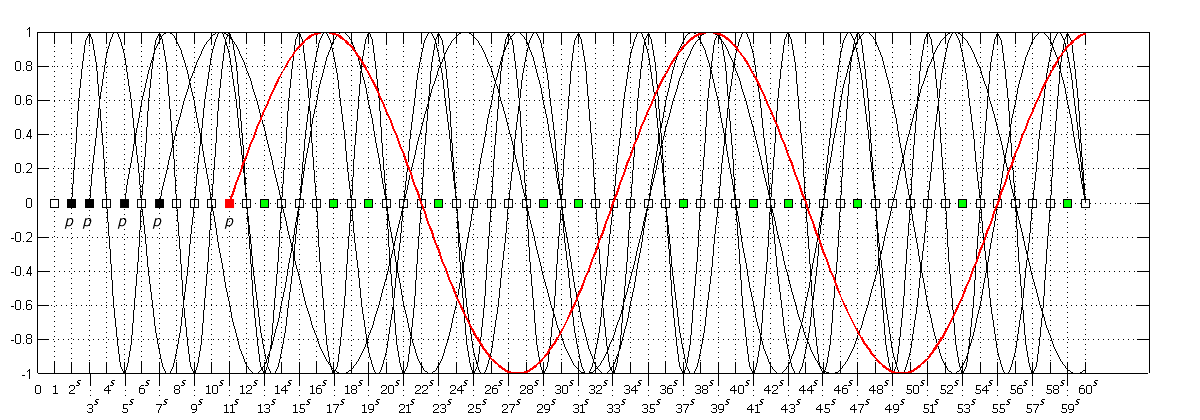}
\end{figure}

\subsection{Dynamical Prime Number Function P($s$) and the Euler product}


The Dynamical Prime Number Function P($s$) expand the primes into its multiples, to extract next primes, by zero-cross (mark) prime multiples as composite numbers similar to the Sieve of Eratosthenes \cite{eras_cite}. Furthermore, the Dynamical Prime Number Function have a close relation to the Euler product.

\begin{proposition}\label{psaseuler}
By representing each time $t$ from the time line as a Riemann zeta function fraction  $\sum_{t=1}^{\infty} \frac{1}{t^s}$

The Dynamical Prime Number function $P(s)$ for $p^s=2^s$ will result in the Riemann's zeta $\frac{1}{2^s} \zeta(s)$ for $s>0$, 
where the zero-cross numbers from $P(s)_{p^s=2^s}$ are the multiples of prime $2^s$.

\begin{equation}
\label{euler2_fig}
P(s)_{p=\{2\}} = \left[ 2^s + sin \left(\frac{1}{2^s}\right) \right] = 
\frac{1}{2^s} \zeta(s) = \frac{1}{2^s} + \frac{1}{4^s} + \frac{1}{6^s} + \frac{1}{8^s} + \frac{1}{10^s} + \dots
\end{equation}

And the numbers from $(1-\frac{1}{2^s}) \zeta(s)$ are possible primes remaining over the time line, as shown in figure \ref{fig_2}.

\begin{equation}
\label{euler2minus1_fig}
\left(1 - \frac{1}{2^s} \right) \zeta(s) = 1 + \frac{1}{3^s} + \frac{1}{5^s} + \frac{1}{7^s} + \frac{1}{9^s} + \frac{1}{11^s} + \frac{1}{13^s} + \frac{1}{15^s} + \dots
\end{equation}

Repeating the process for the following prime number

\begin{equation}
\label{p33xx}
P(s)_{p=\{3\}} = \left[ 3^s + sin \left(\frac{1}{3^s}\right) \right] = \\
\left(\frac{1}{3^s} \right)
\zeta(s) = \frac{1}{3^s} + \frac{1}{6^s} + \frac{1}{9^s} + \frac{1}{12^s} + \frac{1}{15^s} + \frac{1}{18^s} + \dots
\end{equation}

Where by taking out the zero-cross numbers, only possible primes remain in the time line. The $P(s)$ for $p=\{2,3\}$ equals $(1-\frac{1}{3^s})(1-\frac{1}{2^s}) \zeta(s)$ as shown in figure \ref{fig_3}.

\begin{equation}
\label{p33xc}
\left(1 - \frac{1}{3^s} \right)\left(1 - \frac{1}{2^s} \right) \zeta(s) = 1 + \frac{1}{5^s} + \frac{1}{7^s} + \frac{1}{11^s} + \frac{1}{13^s} + \frac{1}{17^s} + \frac{1}{19^s} + \frac{1}{23^s} + \dots
\end{equation}

\end{proposition}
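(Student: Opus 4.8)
The plan is to reduce the proposition to the classical Euler-sieve identities and to verify the displayed expansions by direct summation, reading each bracketed dynamical term as a label for the sieving operation it performs rather than as a pointwise numerical value. Before anything else I would pin down the domain: since every display invokes the Dirichlet series $\sum_{t \ge 1} t^{-s}$, I would restrict to the half-plane $s > 1$ on which $\zeta(s)$ converges absolutely, so that the termwise rearrangements below are legitimate. I would note at the outset that the stated range $s > 0$ has to be narrowed to $s > 1$ for the series manipulations to make sense.

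The genuinely provable core is the pair of sieve identities in \ref{euler2minus1_fig} and \ref{p33xc}. First I would establish $\frac{1}{2^s}\zeta(s) = \sum_{n \ge 1} (2n)^{-s}$ by distributing $2^{-s}$ through the series, which absolute convergence permits; this is the content of \ref{euler2_fig}, namely that $2^{-s}\zeta(s)$ collects exactly the reciprocals of the multiples of $2$. Subtracting from $\zeta(s)$ gives $\left(1 - 2^{-s}\right)\zeta(s) = \sum_{n\,\mathrm{odd}} n^{-s}$, which is \ref{euler2minus1_fig}. For the inductive step I would show that multiplying $\left(1 - 2^{-s}\right)\zeta(s)$ by $\left(1 - 3^{-s}\right)$ deletes precisely the surviving multiples of $3$; the key observation is that once the multiples of $2$ are gone, the smallest surviving term past $1$ is $3^{-s}$, so $3$ is correctly decoded as prime before its multiples are encoded, and iterating this step reproduces the Euler product. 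This part is just the standard sieve-theoretic reading of the Euler product and should go through cleanly.

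The hard part, and indeed the crux on which the whole statement depends, is the asserted equality in \ref{euler2_fig} and \ref{p33xx} between the elementary expression $\left[ p^s + \sin\left(p^{-s}\right) \right]$ and the zeta fraction $p^{-s}\zeta(s)$. As numerical functions of $s$ these disagree --- at $s = 2$, for example, the left side exceeds $4$ while $\frac{1}{4}\zeta(2) < 1$ --- so the equality cannot be read as a pointwise identity. To make the proposition go through I would reinterpret the symbol $P(s)_{p = \{p\}}$ not as the value of the dynamical summand but as a name for the operator ``delete the multiples of $p$ from the time line,'' and then prove that this operator acts on the zeta representation exactly by multiplication by $p^{-s}$. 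Supplying this \emph{operational dictionary} --- a precise statement of how a zero-crossing of the dynamical function annihilates the corresponding Dirichlet term --- is the step I expect to be the main obstacle, since without it the equality signs in \ref{euler2_fig} and \ref{p33xx} remain heuristic and the connection to the Euler product stays an analogy rather than a theorem.
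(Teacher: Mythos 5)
Your proposal is correct as far as a proof of this proposition can exist, and it takes a genuinely different --- and substantially more rigorous --- route than the paper. The paper's entire proof consists of a single asserted identity, equation (\ref{psequivalent}): it declares that the zero-cross reading of the continuation term satisfies $\sin(1/p^s)_{zerocross} = (1/p^s)\sum_{n=1}^{\infty} \frac{1}{n}$, i.e., it supplies \emph{by fiat} exactly the operational dictionary you flagged as the main obstacle, with no argument that zero-crossings of $p^s + \sin(1/p^s)$ correspond to the deleted Dirichlet terms; worse, the right-hand side is $(1/p^s)\,\zeta(1)$, the divergent harmonic series, and the paper retains $s>0$ throughout, so even the asserted dictionary is not a convergent statement. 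By contrast, you prove the provable core honestly: restricting to $s>1$ so that termwise rearrangement is legitimate, you verify $2^{-s}\zeta(s)=\sum_{n\ge 1}(2n)^{-s}$, deduce (\ref{euler2minus1_fig}) by subtraction, and obtain (\ref{p33xc}) by the inductive sieve step --- computations the paper never carries out (it merely displays the expansions inside the proposition statement). Your observation that the equality $\left[\,p^s+\sin\left(1/p^s\right)\right]=p^{-s}\zeta(s)$ is false as a pointwise numerical identity (e.g.\ at $s=2$) is correct and is nowhere acknowledged in the paper. In short: where your proposal stops and says ``here a precise dictionary between zero-crossings and Dirichlet terms is needed,'' the paper's proof inserts an unproved and divergent assertion in its place; your version establishes everything in the statement that is actually true and correctly isolates the one step that, in the paper as in your attempt, remains heuristic.
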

\begin{proof}

The function $P(s)$ will continue decoding primes and zero-cross (encoding) composite numbers, along the time line. 
Thus, the continuation in time of the function $P(s)$ can be defined in terms of the Riemann's zeta function $\zeta(1)$.
Where, for each prime time $p^s$, its continuation in time is the expansion of the prime into its multiples, similar to the $sine$ function when only zero-cross are taken into account, for $s>0$.

\begin{equation}
\label{psequivalent}
 sin \left(\frac{1}{p^s}\right)_{zerocross} = \left(\frac{1}{p^s}\right) \sum_{n=1}^{\infty} \frac{1}{n} 
\end{equation}

\end{proof}

\section{Dynamical Weak Goldbach's Conjecture}

The dynamical prime number function $P(s)$ takes into account all $even$ and $odd$ primes. If taking out of consideration $P(s)$, when $p^s=2^s$, for $s>0$, the zero-cross $composite$ numbers change, excluding only powers of two $\sum_{m>1}^{\infty} 2^{sm}$.

In order to depict the set of $odd$ positive integers bigger than one, $n^s_{odd>1}$, with the set of $odd$ primes  $p^s>2^s$.
It is necessary to modify the dynamical prime number function $P(s)$, for $p^s>2^s$, to zero-cross only $odd$ $composite$ numbers. This is done by doubling the period of the dynamical prime number function $P(s)_{p^s>2^s}$, so the function instead of $odd$ periods $ \left(\frac{1}{p^s}\right)$, consist of $even$ periods $ \left(\frac{1}{2p^s}\right)$. Consequently, the function $P(s)_{p^s>2^s}$ is modified to $P_{odd}(s)$, equation (\ref{eq-psfunction2}), starting at each $odd$ prime time and zero-cross only its $odd$ multiples. While the maximum value of the function exist at $even$ multiples of $odd$ primes.

Furthermore, from the function $P_{odd}(s)$, for $p^s>2^s$, $odd$ multiples of $odd$ primes may be zero-cross by more than one $odd$ prime function, meaning that an $odd$ prime number may be used more than once in the same summation, similar to the method Goldbach's numbers are obtained.

\begin{proof}
Let $s=1$.

Let a zero-cross be represented as a point where the sign of the function changes from positive to negative.

Let a zero-cross over the time line represent a time $t$.

Let times $t$ represent numbers $n$, so $n=t$.

\begin{equation}
\label{eq-psfunction2}
 P_{odd}(s) = \sum_{p^s>2^s} \left[ {p^s} + sin \left(\frac{1}{2p^s}\right) \right]
\end{equation}


Starting at $p=3$, the function $P_{odd}(s)_{p= \{3\}}= \left[ {3} + sin \left(\frac{1}{6}\right) \right]$ zero-crosses $odd$ multiples of $3$, while the maximum value of the function exist at $even$ multiples of $3$, as shown in figure \ref{fig_podd3}.


\begin{figure}[htp]
\caption{$P_{odd}(s)_{p=\{3\}}$ start at $p=3$ and zero-cross $odd$ multiples of 3. In blue $p=3$. \label{fig_podd3}}
\includegraphics[width=1\textwidth]{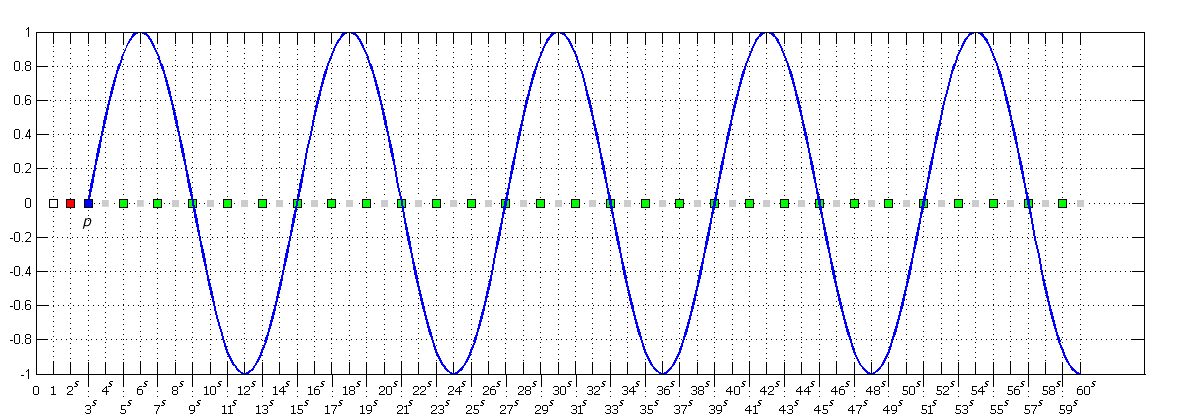}
\end{figure}

At time $p=5$ no zero-cross occur. Therefore, the function $P_{odd}(s)_{p= \{5\}} = \left[ {5} + sin \left(\frac{1}{10}\right) \right]$ start to zero-cross $odd$ multiples of 5, as shown in figure \ref{fig_podd35}.


\begin{figure}[htp]
\caption{$P_{odd}(s)_{p= \{3,5\}} = \left[ {3} + sin \left(\frac{1}{6}\right)\right] + \left[ {5} + sin \left(\frac{1}{10}\right) \right]$ zero-cross $odd$ multiples of $p=3$ and $p=5$. In black $p=3$, in blue $p=5$. \label{fig_podd35}}
\includegraphics[width=1\textwidth]{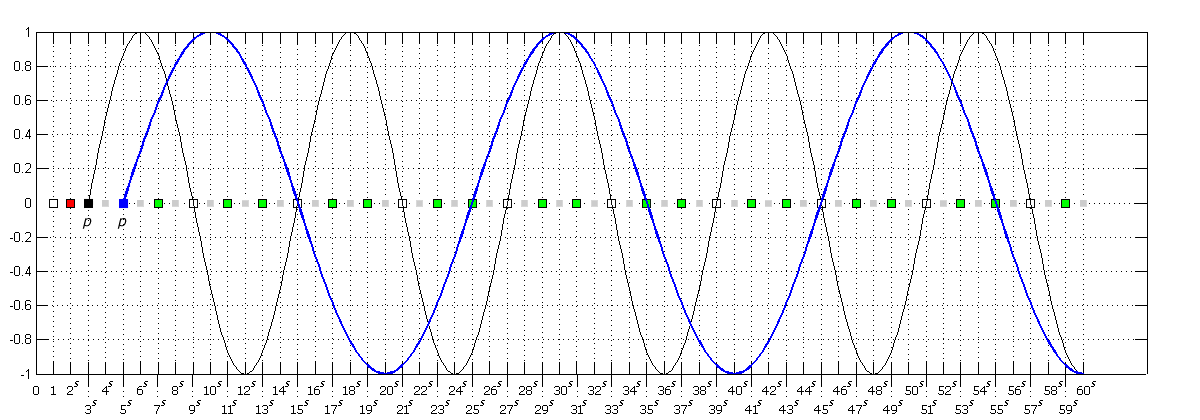}
\end{figure}

At time $p=7$ no zero-cross occur. Thus, the function $P_{odd}(s)_{p= \{7\}} = \left[ {7} + sin \left(\frac{1}{14}\right) \right]$ start to zero-cross $odd$ multiples of 7, as shown in figure \ref{fig_podd357}.


\begin{figure}[htp]
\caption{$P_{odd}(s)_{p= \{3,5,7\}} = \left[ {3} + sin \left(\frac{1}{6}\right)\right] + \left[ {5} + sin \left(\frac{1}{10}\right) \right] + \left[ {7} + sin \left(\frac{1}{14}\right) \right]$ zero-cross $odd$ multiples of $p=3$, $p=5$ and $p=7$. In black $p=3$ and $p=5$, in blue  $p=7$. \label{fig_podd357}}
\includegraphics[width=1\textwidth]{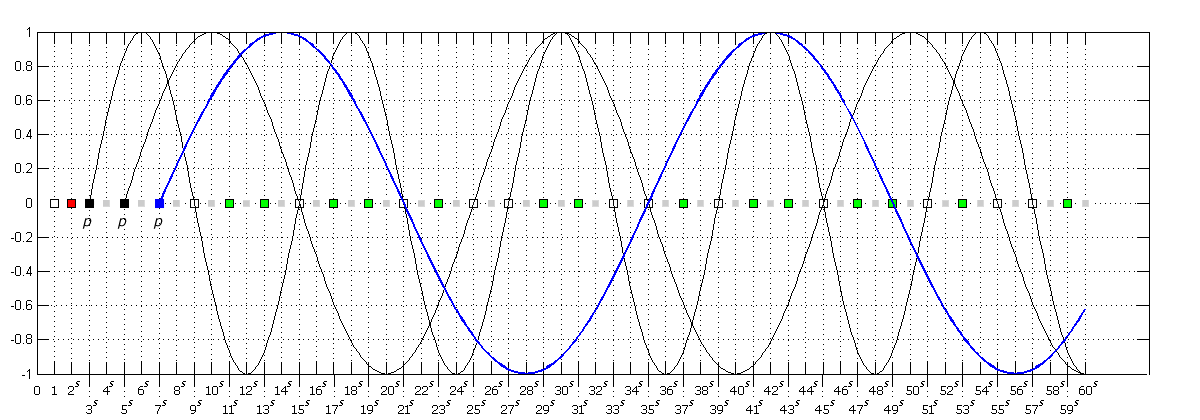}
\end{figure}

%
At time $n=9$ a zero-cross occur from the function $P_{odd}(s)_{p= \{3\}} $ meaning the number is $composite$, as shown in figure \ref{fig_podd3}. 

\begin{figure}[htp]
\caption{$P_{odd}(s)_{p= \{3,5,7,11\}} = \left[ {3} + sin \left(\frac{1}{6}\right)\right] + \left[ {5} + sin \left(\frac{1}{10}\right) \right] + \left[ {7} + sin \left(\frac{1}{14}\right) \right]+ \left[ {11} + sin \left(\frac{1}{22}\right) \right]$ zero-cross $odd$ multiples of $p=3$, $p=5$, $p=7$ and $p=11$. In black $p=3$,  $p=5$,  $p=7$, in blue $p=11$. \label{fig_podd35711}}
\includegraphics[width=1\textwidth]{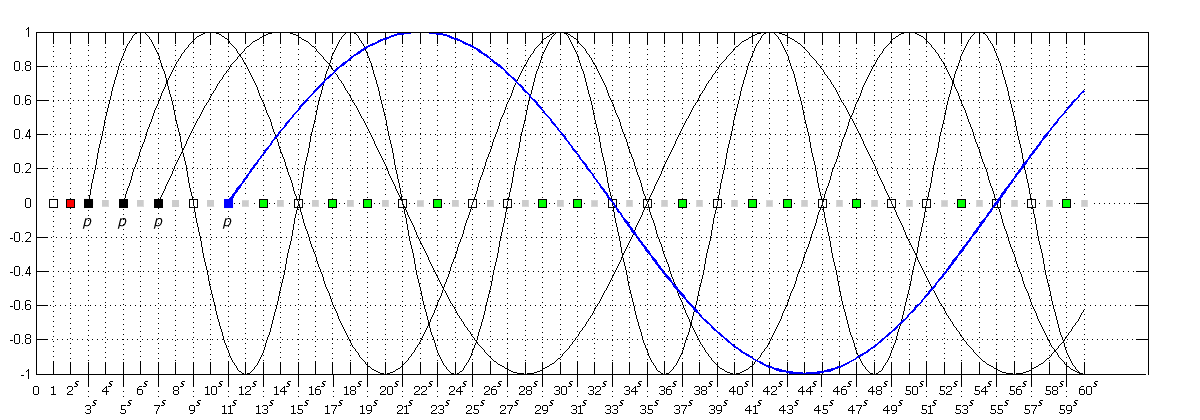}
\end{figure}

At time $p=11$ no zero-cross occur. Thus, the function $P_{odd}(s)_{p= \{11\}} = \left[ {11} + sin \left(\frac{1}{22}\right) \right]$ start to zero-cross $odd$ multiples of 11, as shown in figure \ref{fig_podd35711}.

Consequently, the modified dynamical prime number function $P_{odd}(s)$ for $p>2$, zero-cross all $odd$ composite numbers, starting at $odd$ primes, as shown in figure \ref{fig_podds}. 

\begin{figure}[htp]
\caption{$P(s)_{odd}$, $p>2$; odd multiples of $p>2$ are zero-cross. In red $p=2$, in black $p>2$. \label{fig_podds}}
\includegraphics[width=1\textwidth]{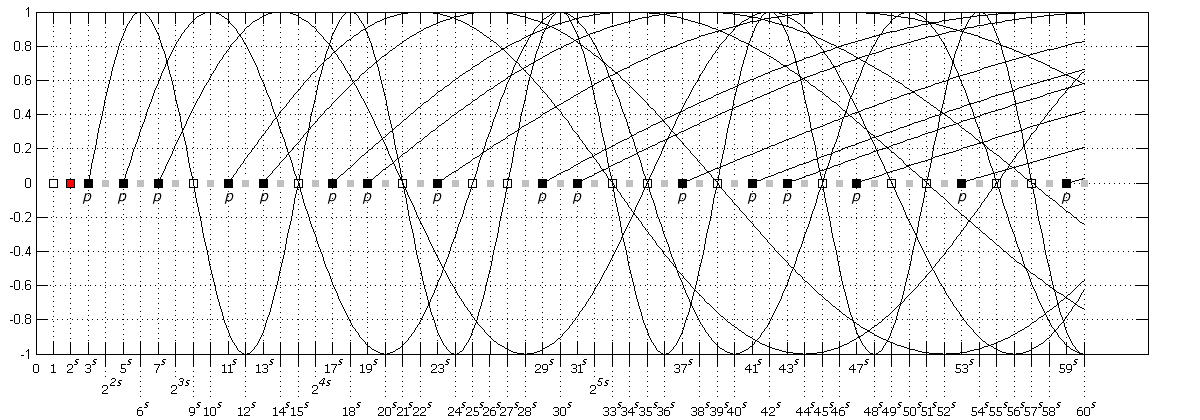}
\end{figure}

%
%
%

It was shown previously that the dynamical prime number function P(s) will zero-cross the entire set of $composite$ numbers, starting at each prime time $p^s$. Moreover, by only taking into account the odd primes, $p^s>2^s$, the zero-cross numbers exclude only powers of two, $2^m$, for $m>1$, from the set $composite$ numbers. Nevertheless, to only zero-cross $odd$ times with the dynamical prime number function $P(s)$, equation (\ref{psfunction}), the period of the function must be doubled as stated in the modified dynamical prime number function $P_{odd}(s)$, equation (\ref{eq-psfunction2}).

In order to demonstrate the weak Goldbach conjecture, first, let's modify the conjecture 
as a dynamical system, where a zero-cross indicates a time $t$, the conjecture states: "Every $odd$ time $t$ greater than 7 will be zero-cross by the sum of three odd dynamical prime number functions $P_{odd}(s)$". (An odd dynamical prime number function $P_{odd}(s)$ can be used more than once in the same summation).

Thus, if the function $P_{odd}(s)$ for $p^s>2^s$, when $s=1$, is able to zero-cross all $odd$ $composite$ numbers from $odd$ primes, then $P_{odd}(s)$ is able depict $odd$ numbers bigger than one $n_{odd > 1}$.

\begin{equation}
\label{weakgold}
 \left(\sum_{p>2}^{\infty} P_{odd}(s) + \sum_{p>2}^{\infty} P_{odd}(s) + \sum_{p>2}^{\infty} P_{odd}(s)\right)  \in   n_{odd > 7}
\end{equation}

Due that times $t$ represent numbers $n$, and the dynamical prime number function $P(s)$ represent prime numbers $p^s$ and its continuation in time, $sin(\frac{1}{p^s})$ for $P(s)$ and $sin(\frac{1}{2p^s})$ for $P_{odd}(s)$. 
If taking out of the equation the continuation in time, $P(s)$ and $P_{odd}(s)$ are represented as prime numbers.



\begin{equation}
\label{psfunction1}
 P(s) = \sum_{p \, \mathrm{ prime}} \left[ {p^s} \right]
\end{equation}

\begin{equation}
\label{psfunction2o}
 P_{odd}(s) = \sum_{p>2} \left[ {p^s} \right]
\end{equation}

The weak Goldbach's conjecture states: "Every odd number $n$ greater than 7 can be expressed as the sum of three odd primes $p>2$". (An odd prime $p>2$ may be used more than once in the same sum.)

\begin{equation}
\label{weakgold}
 \left(\sum_{p>2}^{\infty} p + \sum_{p>2}^{\infty} p + \sum_{p>2}^{\infty} p\right)  \in   n_{odd > 7}
\end{equation}

\end{proof}

\end{document}